\theoremstyle{plain}
\newtheorem*{thm*}{Theorem}
\newtheorem{thm}{Theorem}
\newtheorem{lem}{Lemma}
\newtheorem{defin}{Definition}
\newtheorem{prop}{Proposition}
\theoremstyle{definition}
\newtheorem{rem}{Remark}
\newtheorem{claim}{Claim}
\newcommand{\nc}{\newcommand}
\nc\bR{\mathbb{R}}
\nc\bZ{\mathbb{Z}}
\newcommand{\calP}{{\mathcal P}}
\newcommand{\calR}{{\mathcal R}}
\newcommand{\R}{{\mathbb R}}
\DeclareMathOperator{\const}{u_k}
\author{Jon Chaika}
\author{Howard Masur}
\email{chaika@math.utah.edu}
\email{masur@math.uchicago.edu}
\address{Department of Mathematics, University of Utah, 155 S 1400 E Room 233, Salt Lake City, UT 84112}
\address{Department of Mathematics, University of Chicago, 5734 S. University Avenue, Room 208C, Chicago, IL 60637, USA}
  \thanks{J.C. partially supported by NSF grant DMS 1300550}
\thanks{H.M partially supported by NSF grant DMS 1205016}
\title[Laertes points]{There exists an interval exchange with a non-ergodic generic measure}
\begin{document}

\maketitle

\section{Introduction}
In the initial study of  interval exchange transformations in the Soviet Union,  the United States and western Europe a major motivation  was  basic ergodic theory.  One aspect was to find  bounds on the maximal number of ergodic measures.   Katok \cite{Ka} proved that $n$-IETs have at most $\lfloor \frac n 2\rfloor$ ergodic measures.  A little later Keane \cite{Keane} proved that $n$-IETs have a finite number of ergodic measures and Veech \cite{Veech78} using different methods than Katok also proved that an $n$-IET has at most $\lfloor \frac n 2 \rfloor$ ergodic measures. Various examples of non-ergodic interval exchanges were constructed by  Katok \cite{Ka}, Sataev \cite{Sataev} and  Keane \cite{Keane2}.  In a slightly different context examples were also constructed by  Veech \cite{Veech69}.  Masur \cite{Masur1} and Veech \cite{Veech} later proved
 almost every interval exchange with an irreducible permutation is uniquely ergodic.

The present paper is devoted to the study of invariant measures of minimal and not uniquely ergodic interval exchange transformations. 


Now  let $(X,d)$ be a $\sigma$- compact  metric space, $\mathcal{B}$ be the Borel $\sigma$-algebra, $\mu$ be a Borel measure and $T:X \to X$ be  $\mu$-measure preserving. 

\begin{defin}

A point $x\in X$ is said to be  generic for $\mu$ if for every continuous compactly supported $f:X \to \mathbb{R}$  we have $\sum_{i=0}^{N-1}f(T^ix) \to \int f d\mu$.
\end{defin}

Consider a $\sigma$-compact metric space. The Birkhoff ergodic  theorem and the fact that continuous compactly supported functions with supremum norm have a countable dense set says that if a measure $\mu$ is ergodic then $\mu$-almost every point is generic. 

\begin{defin}
A measure is called \emph{generic} if it is has a generic point. 
\end{defin}
The question arises if  a  non-ergodic measure can be generic.
In settings like the left shift on $\{0,1\}^{\mathbb{Z}}$ it is straightforward to build generic points for non-ergodic measures. Interval exchanges have fewer, much more constrained orbits and so these techniques do not work.
However, in this paper we show that there exists an IET with a generic non-ergodic measure.

\begin{thm}There exists a minimal non uniquely ergodic IET on $6$ intervals with $2$ ergodic measures and which has  a generic measure that is not ergodic.  
\end{thm}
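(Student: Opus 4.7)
The plan is to construct the IET $T$, a point $x$, and a proportion $\alpha \in (0,1)$ simultaneously so that, in the limit, $T$ is minimal with exactly two ergodic probability measures $\mu_1, \mu_2$, and the orbit of $x$ has Birkhoff averages converging to the non-ergodic measure $\mu = \alpha\mu_1 + (1-\alpha)\mu_2$ against every continuous compactly supported test function. The 6-interval restriction is natural because the classical constructions (Keane, Sataev, Masur, Veech) of minimal non-uniquely ergodic IETs with exactly two ergodic measures arise on the standard Rauzy class in genus $2$, which is modeled by 6-IETs.

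My approach is a careful inductive Rauzy--Veech construction. At stage $k$ I fix a 6-IET $T_k$ together with a Rokhlin tower decomposition of the interval whose columns are partitioned into two subfamilies $\calF_1^{(k)}$ and $\calF_2^{(k)}$; the normalized Lebesgue measures restricted to these families will approximate $\mu_1$ and $\mu_2$. I also track an orbit segment $x, T_k x, \ldots, T_k^{N_k-1}x$ of a fixed distinguished point $x$ and I arrange that, for every $N \in [N_{k-1}, N_k]$, the empirical measure $\frac{1}{N}\sum_{j=0}^{N-1}\delta_{T_k^j x}$ lies in a shrinking neighborhood $V_k$ of $\alpha\mu_1+(1-\alpha)\mu_2$ in the weak-$*$ topology. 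Passing from stage $k$ to stage $k+1$ consists of applying a long, carefully chosen block of Rauzy--Veech moves so that: (a) $T_{k+1}$ agrees with $T_k$ on the first $N_k$ iterates of $x$; (b) the tower families refine in such a way that their total masses continue to approximate $\mu_1, \mu_2$ to higher accuracy; and (c) the orbit of $x$ can be continued for $N_{k+1}-N_k$ further steps through a prescribed alternating pattern of visits to $\calF_1^{(k)}$ and $\calF_2^{(k)}$ whose time-fractions are $\alpha$ and $1-\alpha$, with the alternation subdivided into blocks so short that the running average is never pushed out of $V_{k+1}$.

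The combinatorial ingredient that makes step (c) feasible is the standard fact, extractable from Veech's analysis, that along any Rauzy--Veech path one can prescribe the itinerary of an orbit segment through the columns of the current Rokhlin tower decomposition, subject only to the constraint that the segment be realized by some point. Choosing the Rauzy--Veech path to grow its matrices fast enough guarantees, by the Masur--Veech criterion, that the limiting IET is minimal and supports at most two ergodic measures; choosing it to grow slowly enough keeps the two families $\calF_1^{(k)}, \calF_2^{(k)}$ non-degenerate so that there are actually two ergodic measures. Thus in the limit we obtain $T$, $\mu_1$, $\mu_2$ as desired, and the orbit $\{T^j x\}_{j \geq 0}$ is inherited from the compatible segments at each stage.

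The main obstacle, and the technical heart of the argument, is the uniform-in-$N$ control in (c): genericity demands convergence of $\frac{1}{N}\sum_{j=0}^{N-1} f(T^j x)$ for \emph{every} large $N$, not merely along a subsequence. In a shift space this is essentially automatic since one has total freedom to specify symbols; in the IET setting the orbit of $x$ is rigidly pinned down by the parameters of $T$. The block-length bookkeeping must therefore reconcile three competing constraints: each block is short enough that the average inside it cannot drift far from $\alpha\mu_1+(1-\alpha)\mu_2$, yet long enough relative to the current tower heights that the equidistribution of its $\calF_i^{(k)}$-visits actually reflects $\mu_i$ rather than transient boundary effects, and the Rauzy--Veech expansion at stage $k+1$ is compatible with the chosen itinerary. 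Once these three quantitative constraints are made consistent and the corresponding neighborhoods $V_k$ shrink to $\{\alpha\mu_1+(1-\alpha)\mu_2\}$, the distinguished point $x$ is generic for the non-ergodic measure $\mu$, proving the theorem.
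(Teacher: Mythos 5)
Your outline correctly identifies the crux --- genericity requires control of the Birkhoff average at \emph{every} time $N$, and in an IET the itinerary of $x$ is rigidly pinned down --- but it does not supply the mechanism that overcomes this, and the ``standard fact'' you invoke to do so is not a fact. There is no freedom to ``prescribe the itinerary of an orbit segment through the columns of the current Rokhlin tower'' with a chosen alternating pattern of time-fractions $\alpha$ and $1-\alpha$: the admissible itineraries are exactly the concatenations dictated by the substitution structure of the Rauzy path (each level-$(k+1)$ column is a fixed word in the level-$k$ columns, read off from the incidence matrices), and no point can visit the two tower families in any pattern other than these. Consequently step (c) of your plan, which is the entire content of the theorem, reduces to the assertion that ``once these three quantitative constraints are made consistent'' the point is generic --- but that consistency is precisely what has to be proved, and fixing $\alpha$ in advance makes it harder, not easier.

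The paper resolves this differently: the non-ergodic measure is not prescribed but \emph{emerges} from the combinatorics. The Rauzy path alternates between two loops, and on the $k$-th pass the columns $C_C,C_D$ are pushed a proportion roughly $\tfrac 1k$ toward one endpoint of the limiting segment and then roughly $\tfrac 1{k+1}$ back toward the other; since $\prod_k(1-\tfrac 1k)=0$ while consecutive displacements nearly cancel, these columns converge projectively to an interior point $v'$ of the segment, which is the invariant measure $\nu$ (Propositions~\ref{prop:vectors} and~\ref{prop:conv}). The generic point is then the intersection of nested intervals $\mathcal{I}_k$ chosen so that the orbit of $\mathcal{I}_k$ up to time $j_k$ is a concatenation of many copies of the full $C_k$-column orbit, one $D_k$-column orbit, and a negligible $A_k$ block; this forces the visitation vector at \emph{every} intermediate time $j\in[j_{k-1},j_k]$ to stay within $o(1)$ angle of $C_C(n_k)$ (Proposition~\ref{prop:finish}). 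To salvage your approach you would need to replace the itinerary-prescription step with exactly this kind of analysis: identify which concatenations of columns are actually realized by some interval, and design the Rauzy path so that one of them is balanced at all scales.
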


\textbf{Remark:} 
An interesting question is whether there is a bound on the number of generic measures. One can use a standard Rokhlin tower argument to argue that an $n$-IET has at most $n$ measures with the property that each has a point that is generic for both $T$ and $T^{-1}$. This raises the question of whether there exists an IET, $T$, with a generic measure $\mu$ which is not generic for $T^{-1}$.   The generic point for $T$ we produce in Section \ref{sec:gp} is also generic for $T^{-1}$ by an analogous proof, so the example in this paper will not have this property. The Katok \cite{Ka}, Sataev \cite{Sataev} and Veech \cite{Veech69} examples of minimal and not uniquely ergodic IETs are such that  all of their generic measures are ergodic measures. 

\noindent \textbf{Acknowledgments:}  We thank M. Boshernitzan for asking this question and Oberwolfach where this project began.
J. Chaika thanks the University of Chicago for its hospitality.
\section{Rauzy Induction}

We follow the description of interval exchange transformations  introduced in \cite{MMY} and also explicated in \cite{AGY}.  We have an alphabet $\mathcal{A}$ on $d\geq 2$ letters.
Break an interval $I=[0,|\lambda|)$ into intervals 
$\{I_\alpha\}_{\alpha\in\mathcal{A}}$
and rearrange in a new order by translations. 
 Thus the interval
exchange transformation is entirely defined by the following data:
\begin{enumerate}
\item The lengths of the intervals
\item Their orders before and after rearranging
\end{enumerate}

The first are called length data, and are given by a vector
$\lambda\in \mathbb{R}^d$ .
The second are called combinatorial data, and are given by a pair  of bijections
$\pi=(\pi_t,\pi_b)$ from
$\mathcal{A}$ 
to
$(1,\ldots, d)$.
 The bijections 
can be viewed as a pair of rows, the top corresponding to
$\pi_t$
and the bottom corresponding to
$\pi_b$. 

 Given an interval exchange   $T$ defined by  $(\lambda,\pi)$ let $\alpha,\beta\in \mathcal{A}$ the last elements in the top and bottom.  The operation of  Rauzy induction is applied  when $\lambda_\alpha\neq \lambda_\beta$ to give a new IET  $T'$ defined by $(\lambda',\pi')$ where $\lambda',\pi'$ are as follows.   If $\lambda_\alpha>\lambda_\beta$  then $\pi'$  keeps the top row unchanged, and it changes the bottom row by moving $\beta$ to the position immediately to the right of the position occupied by $\alpha$.     We say $\alpha$ {\em  wins} and $\beta$ {\em loses}. For all $\gamma\neq \alpha$ define   $\lambda'_\gamma=\lambda_\gamma$ and define $$\lambda'_\alpha=\lambda_\alpha-\lambda_\beta.$$   

If  $\lambda_\beta>\lambda_\alpha$ then to define $\pi'$ we keep the bottom row the same and the top row is changed by moving $\alpha$ to the position to the right of the position occupied by $\beta$.  Then $\lambda'_\gamma=\lambda_\gamma$ for all $\gamma\neq\beta$ and $\lambda'_\beta=\lambda_\beta-\lambda_\alpha$.  We say $\beta$ wins and $\alpha$ loses.  

In  either case one has a  new interval exchange $T'$ determined by  $(\lambda',\pi')$
and defined on an interval $I'=[0,|\lambda'|)$ where $$|\lambda'|=\sum_{\alpha\in \mathcal{A}} \lambda'_\alpha.$$

The map $T':I'\to I'$ is the first return map 
to a subinterval of
$I$
obtained by
cutting from
$I$
a subinterval with the same right endpoint and of length
$\lambda_\zeta$
where
$\zeta$
is the loser of
the  process described above.

Let $\Delta$ be the standard simplex in $\mathbb{R}^d$ and let ${\calP}$ be the set of permuations on $n$ letters.  We can normalize so that all IET are defined on the unit interval.  
Let  $${\calR}:\Delta\times {\calP}\to \Delta\times {\calP}$$ then denote Rauzy induction.

 There is a corresponding visitation matrix  $M=M(T)$.  Let $\{e_\gamma\}_{\gamma\in \mathcal{A}}$ be the standard basis.  If $\alpha$ is the winner and $\beta$ the loser then $M(e_\gamma)=e_\gamma$ for $\gamma\neq\alpha$ and $M(e_\alpha)=e_\alpha+e_\beta$. We can view $M$ as simply arising  from the identity matrix by adding the $\alpha$ column to the $\beta$ column. 
We can projectivize the matrix $M$ and consider it as $M:\Delta\to \Delta$.

When the interval excahnge $T$ is understood, and we perform Rauzy induction $n$ times then define $M(1)=M(T)$ and inductively $$M(n)=M(n-1)M({\calR}^{n-1}T).$$
That is, the matrix $M(n)$ comes from multiplying $M(n-1)$ on the right by the matrix of Rauzy induction applied to the IET after we have done Rauzy $n-1$ times.  Let $C_\sigma(n)$ the  column of $M(n)$ corresponding to the letter $\sigma\in\mathcal{A}$. 


\begin{defin}  We say an IET satisfies the Keane condition if the orbit of every point of discontinuity is infinite and misses every other such point. It is a fact that such an IET is minimal. 
\end{defin}
If an interval exchange has all powers of Rauzy induction defined on it then it satisfies the Keane condition (see \cite[Corollary 5.4]{viana survey} for a proof in a survey).
 \begin{thm}(\cite[Section 1 and Proposition 3.22]{Veech78}  Let $T$ be an IET satisfying the Keane condition. The space of invariant measures of $T$ is $\cap_{n=1}^{\infty} M(n) \Delta$.  
  \end{thm}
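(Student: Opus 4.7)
The plan is to read the matrix product $M(n)$ geometrically as the combinatorics of a Rokhlin tower: after $n$ steps of Rauzy induction, the induced map $\calR^n T$ is the first return of $T$ to a subinterval $I_n\subset I$, partitioned into pieces $\{I^{(n)}_\alpha\}_{\alpha\in\calA}$, and the $(\gamma,\alpha)$-entry of $M(n)$ is exactly the number of times the $T$-orbit starting in $I^{(n)}_\alpha$ visits $I_\gamma$ before its first return to $I_n$. Thus the levels $T^j I^{(n)}_\alpha$ (for $\alpha\in\calA$, $0\le j<h^{(n)}_\alpha$, where $h^{(n)}_\alpha$ is the column sum of $C_\alpha(n)$) partition $I$, and within one tower the levels are translates of the base.

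The easy inclusion comes first. Given a $T$-invariant probability $\mu$ on $I$, set $\lambda_\gamma=\mu(I_\gamma)$ and $\lambda^{(n)}_\alpha=\mu(I^{(n)}_\alpha)$. Invariance and the tower decomposition immediately give
$\lambda_\gamma=\sum_\alpha (M(n))_{\gamma\alpha}\,\lambda^{(n)}_\alpha$, i.e.\ $\lambda=M(n)\lambda^{(n)}$; normalizing, $\lambda\in M(n)\Delta$ for every $n$.

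For the reverse inclusion, suppose $\lambda\in\bigcap_n M(n)\Delta$. Using the cocycle identity $M(n+1)=M(n)M(\calR^n T)$ from the text, a standard diagonal compactness argument in $\Delta$ produces vectors $\lambda^{(n)}\in\Delta$ with $M(n)\lambda^{(n)}\propto\lambda$ and $\lambda^{(n)}\propto M(\calR^n T)\lambda^{(n+1)}$ for every $n$. Define a set function on the algebra $\calA_\infty$ generated by all tower levels by declaring each level $T^j I^{(n)}_\alpha$ to have mass $\lambda^{(n)}_\alpha$; the consistency of the $\lambda^{(n)}$ makes this well defined and additive. Because $T$ satisfies the Keane condition, $\max_\alpha|I^{(n)}_\alpha|\to 0$, so $\calA_\infty$ generates the Borel $\sigma$-algebra on $I$, and Carath\'eodory extension produces a Borel probability $\mu$. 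Invariance is built in: $T$ sends each non-top tower level to the next one up (same mass by definition), and on a top level it acts via the first return map $\calR^n T$, which permutes the $I^{(n)}_\beta$ in a way compatible with $\lambda^{(n)}$.

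The main obstacle I expect is constructing the coherent family $\{\lambda^{(n)}\}$ in the reverse inclusion: the hypothesis only supplies some preimage at each individual level $n$, so threading them together into a single sequence satisfying $\lambda^{(n)}\propto M(\calR^n T)\lambda^{(n+1)}$ requires the diagonal compactness step. Once that is in place, the Rokhlin tower picture together with the Keane condition delivers countable additivity, the Borel measure $\mu$, and its $T$-invariance essentially automatically.
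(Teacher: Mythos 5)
The paper does not prove this statement at all: it is quoted from Veech's 1978 paper, so there is no internal proof to compare against. Your argument is the standard Rokhlin-tower / inverse-limit proof of that classical fact, and it is essentially correct: the forward inclusion via $\lambda=M(n)\lambda^{(n)}$ is exactly right, and the diagonal-compactness construction of a coherent family $\{\lambda^{(n)}\}$ (push candidate preimages at deep stages back to level $n$, normalize, extract a diagonal subsequence, and use continuity of the projectivized maps $M(\mathcal{R}^nT)$ on $\Delta$) is the correct way to thread the non-unique preimages together.

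Two steps deserve more justification than you give them. First, the Keane condition does not \emph{directly} say $\max_\alpha|I^{(n)}_\alpha|\to 0$; what one uses is that under the Keane condition every letter wins infinitely often, which forces the heights $h^{(n)}_\alpha=|C_\alpha(n)|$ to tend to infinity and hence $|I_n|\to 0$ (this is the content of the Viana reference the authors cite). Second, the Carath\'eodory step is the one real gap in the write-up: consistency gives you a \emph{finitely} additive set function on the algebra $\mathcal{A}_\infty$ of finite unions of tower levels, but Carath\'eodory extension requires countable additivity on that algebra, which is not automatic for set functions on algebras of half-open intervals. It does hold here, because the normalization $\sum_\alpha h^{(n)}_\alpha\lambda^{(n)}_\alpha=1$ together with $h^{(n)}_\alpha\to\infty$ gives $\max_\alpha\lambda^{(n)}_\alpha\to 0$, so every element of the algebra can be approximated from inside by compact unions of deep levels with arbitrarily small loss of mass; alternatively you can sidestep Carath\'eodory entirely by taking a weak-$*$ limit of the measures that spread mass $\lambda^{(n)}_\alpha$ uniformly over each level $T^jI^{(n)}_\alpha$. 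Either patch is routine, but as written the sentence ``Carath\'eodory extension produces a Borel probability $\mu$'' is asserting the nontrivial point rather than proving it. With that repaired, your proof is a complete and standard substitute for the citation.
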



We now specialize to IET on $6$ intervals.
We will   take paths that return infinitely often to the arrangement $$\begin{pmatrix}
A&B&C&D&E&F\\
F&E&D&C&B&A
\end{pmatrix}.$$ Our paths will be of two types which we call the left and right paths. Our paths will depend on a sequence of parameters $a_k$.  Inductively we assume that for the $k^{th}$ time having just traveled on the left hand side we have taken a total of $n_k$ steps of Rauzy induction and returned to the above arrangement. 
Our matrix is $M(n_k)$. 
\begin{rem} In what follows we suppress the matrix that the columns are being taken from. So $C_A(M(n_k+r_k))$ is denoted $C_A(n_k+r_k)$. Given a vector $\bar{v}$ let $|\bar{v}|$ denote the sum of the absolute values of its entries (that is its $\ell^1$ norm).
\end{rem}

Inductively assume  the columns satisfy  


\begin{equation}
\label{eq:induct1}\min\{|C_A(n_k)|,|C_B(n_k)|\}>a_k\max\{|C_E(n_k)),C_F(n_k)|\}\tag{A}
\end{equation}
 and 
\begin{equation}
\label{eq:induct2}
\min\{|C_E(n_k)|,|C_F(n_k)|\}>\frac{a_{k-1}}{k^2}\max\{|C_C(n_k)|,|C_D(n_k)|\}.\tag{B}
\end{equation}
 Now we go along the loop  on the right side.  In the first step of Rauzy induction   $F$ beats $A$. The second step    $F$  beats $B$. 
In the third step  $C$  beats $F$, and then $C$ beats $E$.  Now for a total of $r_k$ consecutive steps   $C$ loses to $D$.  The number $r_k$ is chosen so that it is smallest odd number satisfying 
  \begin{equation}
\label{CDlose}
|C_C(n_k+4+r_k)|>k |C_F(n_k+4+r_k)| \tag{C}
\end{equation}

Now $C$ will beat $D$. 
 Now $F$  beats $C$ and then $F$  beats $D$. Next $E$ beats $F$  consecutively $s_k$ times  for a total of $s_k+r_k+7$ steps until the first time  that 
 \begin{multline}
\label{eq:FbeatsA} |C_F(n_k+r_k+s_k+7)|> a_{k+1} \tag{D}
 \cdot
 \max\{|C_A(n_k+7+r_k+s_k)|,|C_B(n_k+7+r_k+s_k)|\}.
 \end{multline}
  Then $F$ beats $E$ and we return to $$\begin{pmatrix}
A&B&C&D&E&F\\
F&E&D&C&B&A
\end{pmatrix}$$ for the $k+1^{st}$ time. 
The number of steps we have taken is $r_k+s_k+8$ so 
 $$n_{k+1}=n_k+r_k+s_k+8.$$ 
Notice when we return to $\begin{pmatrix}
A&B&C&D&E&F\\
F&E&D&C&B&A
\end{pmatrix}$
we have  by (\ref{eq:induct1})
 $$\min\{|C_E(n_{k+1})|,|C_F(n_{k+1})|\}>a_{k+1}\max\{|C_A(n_{k+1})|,|C_B(n_{k+1})|\}$$ 
and for $a_k$ sufficiently large compared to $k$,  
   $$\min\{|C_A(n_{k+1})|,|C_B(n_{k+1})|\}>\frac{a_k}{k^2}\max\{|C_C(n_{k+1})|,|C_D(n_{k+1}|)\}$$ so (\ref{eq:induct2}) holds.  
   
The diagrams below describe the  paths on the right side and left side of the Rauzy diagram. The permutation$\begin{pmatrix} A&B &C&D&E&F\\ F&E & D&C&B&A \end{pmatrix}$ is a common point.

\begin{figure}[h]\label{fig:rauzy-diagram}
\begin{tikzpicture}
\path[->] node{$\begin{pmatrix} A&B &C&D&E&F\\ F&E & D&C&B&A \end{pmatrix}$} (2,0) edge (3,0);
\path[->] (5,0) node{$\begin{pmatrix} A&B &C&D&E&F\\ F&A&E & D&C&B \end{pmatrix}$};
\path[->] (5,-.5) edge (5,-1) (5,-1.5) node {$\begin{pmatrix} A&B &C&D&E&F\\ F&B&A&E & D&C \end{pmatrix}$};
\path[dotted, ->] (7.2,-1.5) edge (8,-1.5) (10,-1.5) node{$\begin{pmatrix} A&B &C&F&D&E\\ F&B&A&E&D&C \end{pmatrix}$};
\path[dotted,->](9,-2) edge (9,-2.5) (10,-3) node {$\begin{pmatrix} A&B &C&E&F&D\\ F&B&A & E&D&C \end{pmatrix}$};
\path[dotted,->] (8,-3) edge (6.8,-2);
\path[->](12,-3) edge[loop right] (13,-3);
\path[->] (12.6,-2.6) node{$r_k$ times};
\path[->] (5,-2) edge (5,-2.5) (5,-3) node {$\begin{pmatrix} A&B &C&D&E&F\\ F&C&B&A&E & D \end{pmatrix}$};
\path[->] (5,-3.5) edge (5,-4) (5,-4.5) node{$\begin{pmatrix} A&B &C&D&E&F\\ F&D&C&B&A&E \end{pmatrix}$};
\path[->] (2.8,-4.4) edge (0,-.7);
\path[dotted, ->] (7,-4.4) edge[loop right] (7.5,-4.4) (7.9,-4.2) node{$s_k$ times};


\end{tikzpicture}
\caption{The path we take on the right hand side of the Rauzy diagram}
\end{figure}

\begin{figure}[h]\label{fig:rauzy-diagramLHS}
\begin{tikzpicture}
\path[->] node{$\begin{pmatrix} A&B &C&D&E&F\\ F&E & D&C&B&A \end{pmatrix}$} (-2,0) edge (-3,0);
\path[->] (-5,0) node{$\begin{pmatrix} A&F&B &C&D&E\\ F&E & D&C&B&A  \end{pmatrix}$};
\path[->] (-5,-.5) edge (-5,-1) (-5,-1.5) node {$\begin{pmatrix} A&E&F&B &C&D\\ F&E & D&C&B&A \end{pmatrix}$};
\path[dotted, ->] (-7.2,-1.5) edge (-8,-1.5) (-10,-1.5) node{$\begin{pmatrix} A&E&F&B&C&D\\ F&E&D&A&C&B\end{pmatrix}$};
\path[dotted,->](-9,-2) edge (-9,-2.5) (-10,-3) node {$\begin{pmatrix} A&E&F&B &C&D\\ F&E & D&B&A&C \end{pmatrix}$};
\path[dotted,->] (-8,-3) edge (-6.8,-2);
\path[->](-12,-3) edge[loop left] (-13.3,-3);
\path[->] (-12.7,-2.6) node{$r_k$ times};
\path[->] (-5,-2) edge (-5,-2.5) (-5,-3) node {$\begin{pmatrix} A&D&E&F&B&C\\ F&E & D&C&B&A  \end{pmatrix}$};
\path[->] (-5,-3.5) edge (-5,-4) (-5,-4.5) node{$\begin{pmatrix} A&C&D&E&F&B\\ F&E & D&C&B&A \end{pmatrix}$};
\path[->] (-2.8,-4.4) edge (0,-.7);
\path[dotted, ->] (-7.3,-4.4) edge[loop left] (-7.5,-4.4) (-7.9,-4.2) node{$s_k$ times};


\end{tikzpicture}
\caption{The path we take on the left hand side of the Rauzy diagram}
\end{figure}

\pagebreak
  On the left hand side $A$ beats $F$ and then beats $E$; then $D$ beats $A$ and then  $B$; then $C$ beats $D$ repeatedly  $r_{k+1}$ times until  the first time that  
$$C_D>(k+1)C_A.$$
Then $D$ beats $C$.   
Then $A$ beats $D$ and then $C$ and then $B$ beats $A$ repeatedly 
$s_{k+1}$ times  until   the first time that  
\begin{multline*}C_A(n_{k+1}+r_{k+1}+7+s_{k+1})>a_{k+2}
 \min\{(C_E(n_{k+1}+7+s_{k+1}+m_{k+1}),C_F(n_{k+1}+7+r_{k+1}+s_{k+1})\}.
 \end{multline*}
Finally $A$ beats $B$ and we return.  

Our main Theorem is then the following.  
\begin{thm}
For choices of $a_k$ there is a minimal  $6$ interval exchange transformation   $T$ with  the  property  that the resulting simplices $\Delta_k=M(k)(\Delta)$ converge to a line segment as $k\to\infty$.  The endpoints are ergodic measures for $T$.    One endpoint is the common projective limit of the column  vectors $C_A,C_B$; the other endpoint  is the common projective limit of the column vectors $C_E,C_F$.  Along the sequence $n_k$ the column vectors corresponding to $C_C,C_D$ converge projectively to a common  interior point of the segment and this limit point is a generic measure.
\end{thm}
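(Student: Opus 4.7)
The plan is: (i) verify the Rauzy path is realizable and $T$ is minimal; (ii) show the six columns at time $n_k$ coalesce pairwise into three projective limits $\nu_1, \nu_2, \nu_3$; (iii) show these limits are collinear with $\nu_3$ strictly between $\nu_1$ and $\nu_2$; (iv) build a generic point for $\nu_3$. For (i), choose $a_k$ growing fast enough so that $r_k, s_k$ solving (C), (D) exist at every stage. The nested $\Delta_k = M(k)\Delta$ are then non-empty, any $\lambda \in \bigcap_k \Delta_k$ yields an IET $T$ on which every power of Rauzy induction is defined, so $T$ satisfies the Keane condition and is minimal, and by the Veech theorem above, $\bigcap_k M(k)\Delta$ is precisely its space of invariant probability measures.

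For (ii), I would track how the within-pair difference $C_\alpha - C_\beta$ evolves over one right-plus-left cycle. In each loop, the first several steps augment both columns of a pair by the same vector, preserving their difference; only the long terminal subloops (``$B$ beats $A$'' $s_{k+1}$ times, ``$D$ beats $C$'' $r_k$ times, ``$E$ beats $F$'' $s_k$ times, etc.) perturb the difference by a bounded residual while multiplying the column norms by $r_k$ or $s_k$. An explicit trace of the left loop gives $C_A - C_B = -(C_B^{\mathrm{mid}} + C_D^{\mathrm{mid}})$ at $n_{k+1}$, yielding $|\widehat{C_A}(n_{k+1}) - \widehat{C_B}(n_{k+1})| = O(1/s_{k+1})$, and symmetrically for the other two pairs. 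Hence there exist three projective limits $\nu_1 = \lim\widehat{C_A}(n_k), \nu_2 = \lim\widehat{C_E}(n_k), \nu_3 = \lim\widehat{C_C}(n_k)$, all $T$-invariant.

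For (iii), using the explicit end-of-cycle formula
\[C_C(n_{k+1}) = A' + (r_{k+1}+1)C' + 2D',\qquad C' = 2C + r_k D + F,\]
$\widehat{C_C}(n_{k+1})$ is expressed as a convex combination of $\widehat{A'}, \widehat{C'}, \widehat{D'}$. Over multiple cycles the columns $C_C, C_D$ acquire both $\nu_1$- and $\nu_2$-directed mass: the $\nu_1$ contribution enters directly through the $A'$ term, while the $\nu_2$ contribution is inherited through the $E$-component that $C_F$ accumulates during the terminal ``$E$ beats $F$'' subloop of the right side (and is then injected into the $C, D$ columns by the moves ``$F$ beats $C$'', ``$F$ beats $D$''). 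Using the balance built into (A)--(D), both contributions must be shown to remain of comparable, non-vanishing projective size in the limit, forcing $\nu_3 = t\nu_1 + (1-t)\nu_2$ with $t \in (0,1)$. This collinearity degenerates $M(n_k)\Delta$ to the segment $[\nu_1, \nu_2]$, whose extreme points are the two ergodic measures.

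The hardest step is (iv), producing a generic point for $\nu_3$. The natural candidate is the point $x$ lying in the nested sequence of base intervals of the $C$-towers at every level $n_k$. At the special times $N_k = |C_C(n_k)|$, the orbit $\{T^i x\}_{i<N_k}$ has visitation statistics equal to $C_C(n_k)$, whose normalization converges to $\nu_3$, so the empirical measures converge along this subsequence. The genuine obstacle is upgrading this to convergence at \emph{every} large $N$: between consecutive $N_k$'s the orbit ascends partial subsidiary towers whose visitation statistics are tilted toward $\nu_1$ or $\nu_2$. The parameters $a_k, r_k, s_k$ must be tuned finely enough that these intermediate-time tilts cancel in the time average, keeping it close to $\nu_3$ uniformly in $N$; this quantitative control on intermediate times is the expected main technical obstacle and the heart of Section~\ref{sec:gp}.
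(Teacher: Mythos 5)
Your architecture (realizability, pairwise coalescence of columns, collinearity with an interior third limit, then a generic point) matches the paper's, and steps (i)--(ii) are fine: your difference-tracking is equivalent bookkeeping to the paper's angle estimates via Lemma~\ref{lem:angles}. But the two steps that carry the actual content are left as assertions, and in both cases the mechanism you gesture at is not the one that works. In (iii) you say the $\nu_1$- and $\nu_2$-contributions to $C_C$ ``must be shown to remain of comparable, non-vanishing projective size,'' but give no reason why they do. The paper's reason is a precise alternating contraction: condition (C) is engineered so that when $F$ is merged with $C$ at the end of a right loop, the direction of $C_C$ moves a fraction $\tfrac1k+O(\tfrac1{k^2})$ of the way toward that of $C_F$ (Lemma~\ref{lem:angles}), i.e.\ its distance to the $\{E,F\}$-limit contracts by $1-\tfrac1k$, and symmetrically on the left toward the $\{A,B\}$-limit. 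Then $\prod_k(1-\tfrac1k)=0$ forces the limit onto the segment, consecutive displacements along the segment are $O(1/k^2)$ so the sequence is Cauchy, and near either endpoint the outward step of size $\approx\tfrac1k|v_1-v_0|$ dominates the inward step of size $\approx\tfrac1k\cdot(\text{small})$, which rules out the endpoints. Without this (or an equivalent quantitative argument) your convex-combination accounting does not close, and interiority is exactly the nontrivial claim.

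In (iv) both your candidate point and your proposed mechanism are off. The base intervals of the $C$-towers at successive stages are not nested, so ``the point lying in the nested sequence of base intervals'' is not defined; the paper instead takes $x=\bigcap_k\mathcal I_k$ with $\mathcal I_k$ placed in the \emph{middle} of the $C_k$-block, via $\hat I_k=S_k^{u_k}(C_k)\cap S_k^{-u_k}(C_k)\cap C_k$ intersected with the orbit of $D_{k+1}$, where $u_k=\lfloor r_{k+1}/2\rfloor$. More importantly, the intermediate-time control is not a cancellation of opposite tilts that one must tune $a_k,r_k,s_k$ to achieve: by construction the orbit of $x$ between the renewal times $j_{k-1}$ and $j_k$ decomposes into full copies of the level-$(k-1)$ towers over $D$ (and $C$, $F$, $A$), each of whose visitation vectors already points within angle $o(1)$ of $\nu_3$ (Proposition~\ref{prop:vectors} and Lemma~\ref{lem:v}), plus partial blocks whose lengths are $o$ of the elapsed time. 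A sum of vectors each within angle $\epsilon$ of a fixed direction stays within angle $\epsilon$ of it, so every intermediate empirical average is automatically close to $\nu_3$; no cancellation occurs or is needed. Producing a nested sequence of intervals for which every intermediate orbit segment has this ``full blocks plus negligible remainder'' structure is the real content of Section~\ref{sec:gp}, and it is absent from your plan.
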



  
 \subsection{Outline of proof}

The proof is in two main steps.  In the next subsection we show that   $\cap_{n=1}^{\infty}M(n)\Delta$ converges to a line segment. This is given by Proposition~\ref{prop:conv} with preliminary Proposition~\ref{prop:vectors}.   This shows that there are two ergodic measures. 
  We also show that the sequence of vectors given by the  C$^{th}$ and D$^{th}$ columns of the matrices $M(n)$ converge to a single point away from the endpoints in $\cap_{n=1}^{\infty}M(n)\Delta$.
  This gives the  invariant measure $\nu$ we are interested in.  In the section after that we show that $\nu$ is generic by finding a generic point $x$.

  \subsection{Convergence of column vectors} 
 
The first Lemma is simple Euclidean geometry. Our angles will be bounded away from $\frac \pi 2$ and so $\sin x$ will be comparable with $x$.
\begin{lem} 
\label{lem:angles}Let $v, w \in \R^n,$ and let $\theta(v,w)$ denote the angle between $v$ and $w$. If $\theta(v+w,w)$ denotes the angle between $v+w$ and $w$, we have $$| \sin \theta(v+w,w) |= \frac{\|v\|}{\|v+w\|} |\sin \theta(v,w)|.$$  \end{lem}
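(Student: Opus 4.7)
The plan is to reduce this to an elementary computation using an orthogonal decomposition of $v$ with respect to $w$. Write $v = v_\parallel + v_\perp$ where $v_\parallel$ is the projection of $v$ onto the line spanned by $w$ and $v_\perp$ is orthogonal to $w$. Then $v+w = (w + v_\parallel) + v_\perp$, and since $w + v_\parallel$ is a scalar multiple of $w$ while $v_\perp \perp w$, this is exactly the orthogonal decomposition of $v+w$ with respect to $w$.

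For any nonzero vector $u$ with orthogonal decomposition $u = u_\parallel + u_\perp$ relative to $w$, the definition of the angle between $u$ and $w$ gives $|\sin\theta(u,w)| = \|u_\perp\|/\|u\|$. Applying this to $u=v$ gives $|\sin\theta(v,w)| = \|v_\perp\|/\|v\|$, and applying it to $u=v+w$ gives $|\sin\theta(v+w,w)| = \|v_\perp\|/\|v+w\|$, since the perpendicular component of $v+w$ is the same $v_\perp$. Taking the ratio yields
\[
|\sin\theta(v+w,w)| = \frac{\|v\|}{\|v+w\|}\,|\sin\theta(v,w)|,
\]
which is the claimed identity.

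There is no real obstacle here; the only caveat is degenerate cases. If $v=0$ both sides are $0$. If $w=0$ the statement is vacuous since the angle is undefined. If $v+w=0$ then $v=-w$ has $\sin\theta(v,w)=0$, so both sides should be interpreted as $0$. Alternatively one can give a one-line derivation from the law of sines applied to the triangle with vertices $0$, $w$, $v+w$, whose side lengths are $\|w\|$, $\|v+w\|$, $\|v\|$ and whose angle at $0$ (between $w$ and $v+w$) is $\theta(v+w,w)$ while the angle at $w$ equals $\pi - \theta(v,w)$; either route produces the stated formula immediately.
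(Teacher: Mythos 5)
Your proof is correct and complete. The paper offers no proof of this lemma at all --- it simply labels it ``simple Euclidean geometry'' and moves on --- so there is nothing to compare against; both of your routes (the orthogonal decomposition $v = v_\parallel + v_\perp$, noting that $v+w$ has the same perpendicular component $v_\perp$ relative to $w$, and the law of sines in the triangle with vertices $0$, $w$, $v+w$) are exactly the intended elementary argument, and your handling of the degenerate cases is a sensible addition.
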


We will now simplify notation for the column  vectors, using $v$ for a vector and eliminating some of the times of Rauzy induction.

We assume that we  enter  the right side after an even number of returns.  
We define   $v_F(2k+1),v_E(2k+1)$ to be the column vectors  $C_F(n_{2k}+3), C_E(n_{2k}+4)$ after  $F,E$ lose to $C$ on the right side.  We choose  $v_F(2k+2)$  to be the column vector $C_F(n_{2k}+r_{2k}+m_{2k}+7)$ after  $E$ is added to $F$ on the right side and $v_E(2k+2)$ the column vector $C_E(n_{2k}+r_k+8)=C_E(n_{2k+1})$ after $C_F$ is added to $C_E$.

Similarly  let $v_A(2k)$ be the column vector  $C_A(n_{2k-1}+3)$  on the  left side just after $A$   has lost to $D$ and $v_A(2k+1)$ is the column vector  $C_A(n_{2k})=C_A(n_{2k-1}+r_{2k-1}+m_{2k-1}+7)$ just after $B$ has finished beating $A$. 
Similarly the column vectors $v_B(2k)$ and $v_B(2k+1)$ are defined in terms of  $C_B$.

We define $v^R_C(2k+1)$  to be the column vector $C_C(n_{2k}+r_{2k}+4)$ when $C$ has finished losing to $D$ on the right side and $v^R_C(2k+2)$  the column vector for $C$  after  $C$ loses to $F$.  This ensures that $v^R_C(2k+2)$ lies on the line joining $v^R_C(2k+1)$ and $v_F(2k+1)$ which will be needed later.   Similarly, $z^L_C(2k)$ is the column vector  for $C$ on the left side after $D$ has beaten $C$, and $z^L_C(2k+1)$ is the column vector after $A$ has beaten $C$. We have similar definitions for $v_D^L(2k),v_D^L(2k+1)$ in terms of the column vectors $C_D$.

The following Proposition gives the properties these vectors satisfy. They will be used to prove the desired convergence results.
  
\begin{prop}
\label{prop:vectors}
With  a  choice of  $a_k$ there exist  constants $\delta>0$, $\alpha<1$  
such that  for $i,j\in\{1,2\}$, 
and all $k$
\begin{enumerate}
\item for $\sigma,\sigma'\in\{E,F\}$,  $\theta(v_\sigma(k),v_{\sigma'}(k+1))<\alpha^k$ and similarly for $A,B$.
\item for $\sigma\in\{E,F\}$ and $\sigma'\in\{A,B\}$,   $\theta(v_\sigma(k),v_{\sigma'}(k))\geq\delta$. 
\item $\theta(v^R_C(2k+2),v_F(2k+1))=(1-\frac 1 k+O(\frac 1 {k^2}))\theta(v^R_C(2k+1),v_F(2k+1))$, 
with the same formula for $v^R_D$.
\item $\theta(v^L_D(2k+1),v_A(2k))=(1-\frac 1 k+O(\frac 1 {k^2}))\theta(v^L_D(2k),v_A(2k))$, with the same formula for $v^L_C$. 
\item  $\theta(v_C^{R}(k),v_D^{R}(k))\leq\alpha^k$ for all $k$. The same holds on the left.
\item $\theta(v_C^L(2k),v^R_C(2k))\leq\alpha^k$.
\item $\theta(v^R_D(2k+1),v_D^L(2k+1))\leq\alpha^k$.
\end{enumerate}

Moreover $v^R_C(2k+2)$ lies on the line connecting $v_C^R(2k+1)$ and $v_F(2k+1)$ and $v_C^L(2k+1)$  lies on the line connecting $v_C^L(2k)$ and $v_A(2k)$ with the same  for $v_D$. 
  
\end{prop}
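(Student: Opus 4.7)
The plan is to prove all seven conclusions together with (A) and (B) by induction on $k$, the main tool being Lemma~\ref{lem:angles} applied step-by-step to each Rauzy operation in the right loop (from $n_{2k}$ to $n_{2k+1}$) and the left loop (from $n_{2k-1}$ to $n_{2k}$). The crucial structural fact is that $C_A, C_B$ are never winners during a right loop (hence unchanged there), and symmetrically $C_E, C_F$ are never winners during a left loop; this dichotomy drives items (1) and (2). For (1), the $s_k$ consecutive steps in which $E$ beats $F$ in the right loop send $C_E \mapsto C_E + C_F$ each time, and Lemma~\ref{lem:angles} forces $\theta(C_E, C_F)$ to contract at each step by a factor comparable to $\|C_E^{\mathrm{old}}\|/\|C_E^{\mathrm{old}}+C_F\|$; condition (D) makes $s_k$ large enough to yield exponential contraction, and the final ``$F$ beats $E$'' step modifies $C_F$ only by an already nearly-parallel vector, so $\theta(v_E,v_F)$ remains small. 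The same argument handles $A,B$ on the left. For (2), at each $n_k$ condition (A) makes one of $\{A,B\}$ or $\{E,F\}$ dominate the other by the growing factor $a_k$; the subordinate pair's direction is carried over from the previous loop on its own side, so the two pairs acquire structurally distinct directions, and a definite angular separation $\delta>0$ can be established from the base case and propagated.

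For (3) and (4), the key computation is that the $r_{2k}$ steps of ``$D$ beats $C$'' followed by ``$C$ beats $D$'' on the right yield $v_C^R(2k+2) = v_C^R(2k+1) + C_D(n_{2k}+4+r_{2k})$, which by construction lies on the line joining $v_C^R(2k+1)$ and the current $C_D$; combined with the inductive hypotheses linking the direction of $C_D$ to that of $v_F(2k+1)$, Lemma~\ref{lem:angles} transforms the angle of $v_C^R$ with $v_F(2k+1)$, and the choice in (C) of the smallest odd $r_{2k}$ making the relevant $\ell^1$-norms balance at the factor $k$ is what yields precisely the ratio $1 - 1/k + O(1/k^2)$. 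Items (5)--(7) follow from the same iteration: after the $r_{2k}$ rounds the columns $C_C, C_D$ become nearly parallel at exponential rate $\alpha^k$, and the left-right consistency in (6), (7) uses that between a right and the subsequent left visit, $C_C$ and $C_D$ are perturbed only by contributions from columns which are small in $\ell^1$-norm by (A), (B), yielding only exponentially small angular drift.

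The principal obstacle is extracting the precise factor $(1 - 1/k + O(1/k^2))$ in (3) and (4). Unlike the other estimates, which tolerate any sufficiently rapid growth of $a_k$, this rate is dictated by the choice of $r_{2k}$ in (C) and must be reconciled with the $\ell^1$ estimates from (A), (B), (D) simultaneously, while respecting the linear relation that ensures $v_C^R(2k+2)$ genuinely lies on the stated line. Closing the induction requires choosing $a_k$ growing rapidly enough (for instance super-exponentially in $k$) that all inductive hypotheses remain consistent across every loop.
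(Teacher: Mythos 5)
Your overall strategy---induct on $k$, apply Lemma~\ref{lem:angles} to each elementary Rauzy step, and let the stopping rules (A)--(D) dictate the contraction rates---is exactly the paper's strategy. However, you have the column-update convention backwards, and this is not merely notational. In this construction it is the \emph{loser's} column that accumulates the \emph{winner's}: when $\alpha$ beats $\beta$, $C_\beta \mapsto C_\beta + C_\alpha$. This is forced by the stopping rules themselves: (C) requires $|C_C|$ to grow during the $r_k$ steps in which $C$ is repeatedly \emph{losing} to $D$, and (D) requires $|C_F|$ to grow while $E$ is beating $F$; under your convention neither quantity would change during those steps and the stopping rules would be vacuous. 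Consequently your ``crucial structural fact'' is false: $C_A$ and $C_B$ are \emph{not} unchanged during a right loop---each has $C_F$ added to it once at the start---though the resulting angular change is $O(1/a_k)$ by (A), which is the statement item (1) actually needs.

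The real casualty is items (3)--(4), which you yourself flag as the principal obstacle but then resolve incorrectly. You assert $v_C^R(2k+2) = v_C^R(2k+1) + C_D(n_{2k}+4+r_{2k})$ and try to extract the factor $1-\frac1k$ from a purported link between the direction of $C_D$ and that of $v_F(2k+1)$. No such link exists---$C_C$ and $C_D$ converge projectively to an interior point of the limiting segment while $v_F$ converges to an endpoint---and your identity contradicts the ``Moreover'' clause you must prove, which places $v_C^R(2k+2)$ on the segment joining $v_C^R(2k+1)$ to $v_F(2k+1)$. The correct mechanism is the single step ``$F$ beats $C$'' just after the $r_{2k}$ rounds: it sends $C_C \mapsto C_C + C_F$, and since $C_F$ has not changed since $F$ lost to $C$, this gives $v_C^R(2k+2) = v_C^R(2k+1) + v_F(2k+1)$, yielding the collinearity for free; Lemma~\ref{lem:angles} then gives
$\theta\bigl(v_C^R(2k+2), v_F(2k+1)\bigr) \approx \frac{\|v_C^R(2k+1)\|}{\|v_C^R(2k+1)+v_F(2k+1)\|}\,\theta\bigl(v_C^R(2k+1), v_F(2k+1)\bigr)$,
and the minimality of the odd $r_{2k}$ in (C) pins the ratio $\|C_C\|/\|C_F\|$ at $k(1+o(1))$, whence the factor $1-\frac1k+O(\frac1{k^2})$. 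A similar correction applies to (6)--(7): the perturbation of $C_C$ between a right and the subsequent left visit is not ``small in $\ell^1$-norm''---the column $C_D$ added to it when $D$ beats $C$ is enormous---but it is nearly parallel to $C_C$ because it has just absorbed $r_{k+1}$ copies of $C_C$; the smallness is angular, not in norm.
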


\begin{proof}
The vectors $v_E(2k+1)$ (resp. $v_F(2k+1)$) arises from  $v_E(2k)$ (resp.$v_F(2k)$) from $E$ (resp. $F$)   losing first to  $A$ on the left side and then to $C$ back on the right side. Recall this means that the $A$ column and then the $C$ column are added to the $E$ (resp.$F$ column).   Then by   
(\ref{eq:induct1}) and (\ref{eq:induct2}) and  Lemma~\ref{lem:angles}, 
  $$\theta(v_E(2k+1),v_E(2k))\leq \frac{1}{a_{k+1}+\frac{a_{k+1}}{k^2}},$$ and similarly for $v_F$.  
Similarly we have  $$\theta(v_A(2k+2),v_A(2k+1))\leq \frac{1}{a_{k+2}+\frac{a_{k+2}}{(k+1)^2}}$$ and the same for $v_B$.    


On the right side the   $C_E$ column is added  to the $C_F$ column  many times and then    $C_F$ is added to $C_E$.  By (\ref{eq:induct1}) and (\ref{eq:FbeatsA})  and Lemma~\ref{lem:angles}, this gives $$\theta(v_F(2k+2), v_E(2k+2))\leq \frac{1}{a_ka_{k+1}}\theta(v_F(2k+1),v_E(2k+1)).$$

The same argument shows that
 $$|\theta(v_F(2k+2),v_F(2k+1))-\theta(v_F(2k+1),v_E(2k+1))|\leq \frac{1}{a_ka_{k+1}}.$$ 
We have similar inequalities for the $v_A,v_B$.  
 If we let $a_k$ be exponentially small,  putting these together we see  (1) is  satisfied.

We also note that  on the right side $F$ beats $A,B$ only when smaller by the factor of $a_k$ and similarly on the left side $A$ beats $E,F$ only when smaller by a factor of $a_{k+1}$ which gives  $$\theta(v_E(k+1),v_A(k+1))\geq (1-(\frac{1}{a_k}+\frac{1}{a_{k+1}}))\theta(v_E(k),v_A(k)).$$
Thus since initially the angle between the column for $E$ and $A$ is positive  we can choose $a_k$  so  (2) is satisfied. 

We prove (3),(4). When $C$ or $D$ loses to $F$,
$C_C$ moves towards $C_F$ so
$$\theta(v_C^R(2k+2),v_C^R(2k+1))\leq \frac{1}{k}\theta(v_C^R(2k+1),v_F(2k)).$$
The same holds for $v_C^L(2k), v_C^L(2k+1)$ and also for the corresponding $v_D$.  
These imply (3),(4). 

When we have finished on the right we have $v_C^R(2k),v_D^R(2k)$. When we move to the left  we change from  $v_C^R(2k),v_D^R(2k)$ to  $v_C^L(2k),v_D^L(2k)$.
In doing  so we add the $C$ column  to the $D$ column $r_{2k}$ times and then the $D$ column is added back to the $C$ column.
Since $r_k$ grows exponentially this give (5) for $j$ even and left.  Moving from left to right similarly gives  (5) when $j$ is odd and we are on the right.  In addition  in increasing the index on both sides means adding the same column (either $C_A$ or $C_F$) to each so they remain exponentially close.  This proves (5).  
Similarly the fact that $C$ beats $D$ repeatedly on the left  means $v_D^L(2k)$ moves close to $v_C^R(2k)$ and then the fact that $D$ is added to $C$ means that  $v_C^L(2k)$ is close to $v_C^R(2k)$. This is (6). We have the corresponding computation on  the right giving (7).

\end{proof}

The following is the basic Proposition  that will allow us to find limiting invariant measures.

\begin{prop}
 \label{prop:conv}

Consider the sequence of vectors  $v_\sigma(k)\in \mathbb{R}^6$ that satisfy the conclusions of Proposition~\ref{prop:vectors}, projectivized  to lie in the simplex  $$\Delta=\{x_1,\ldots, x_6:\sum_{i=1}^6 x_i=1\}.$$ 

Then 
\begin{itemize}
\item $v_E(k),v_F(k)$ have limit $v_0$ and $v_A(k),v_B(k)$ have a distinct limit $v_1$.
\item $\underset{k \to \infty}{\lim}\, v_C^L(k)=\lim v_C^L(k)=\underset{k \to \infty}{\lim}\, v_D^L(k)=\lim v_D^R(k)=v'$ exists in $\mathbb{P}(\mathbb{R}^6)$
\item $v'$ is on the line between $v_0$ and $v_1$ in  $\mathbb{P}(\mathbb{R}^6)$
\item $v'\neq v_0,v_1$.
\end{itemize}
\end{prop}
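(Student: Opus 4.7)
My plan splits into three tasks.

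First (bullets 1 and 4): the limits $v_0,v_1$ and their distinctness come directly from Proposition~\ref{prop:vectors}(1)--(2). Taking $\sigma=\sigma'=F$ in (1) makes $\{v_F(k)\}$ Cauchy in $\mathbb{P}(\mathbb{R}^6)$ with exponential rate, giving a limit $v_0$, and then $\sigma=F,\sigma'=E$ forces $v_E(k)\to v_0$ as well. The identical argument on $\{A,B\}$ produces $v_1=\lim v_A(k)=\lim v_B(k)$, and (2) gives $\theta(v_0,v_1)\ge\delta$, so $v_0\ne v_1$.

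Second (reduction for bullet 2): parts (5)--(7) together show that at every index $k$ any two of $v_C^R(k),v_D^R(k),v_C^L(k),v_D^L(k)$ lie within angular distance $O(\alpha^k)$ of one another (chaining (5) with (6) at even indices and with (7) at odd indices). So it suffices to prove that one of them, say $v_C^R(k)$, converges; the remaining three will inherit the same limit $v'$.

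Third (bullets 2 and 3): I want to show $v_C^R(k)$ converges to an interior point of $v_0v_1$. The ``moreover'' clause, combined with (6)--(7), says that each update of $v_C^R$ is a projective move along a line toward $v_F$ (on the right) or toward $v_A$ (on the left, relayed through the exponentially close $v_C^L$). Together with the exponential convergence $v_F\to v_0$ and $v_A\to v_1$, this forces $v_C^R(k)$ to remain within $O(\alpha^k)$ of the projective line $v_0v_1$. Setting $\theta=\theta(v_0,v_1)\ge\delta$ and parameterizing $v_C^R(k)$ on that line by $\phi_k=\theta(v_C^R(k),v_0)\in[0,\theta]$, part (3) makes a right-side step send $\phi\mapsto(1-\tfrac{1}{k}+O(\tfrac{1}{k^2}))\phi$, and part (4) makes a left-side step send $\theta-\phi\mapsto(1-\tfrac{1}{k}+O(\tfrac{1}{k^2}))(\theta-\phi)$. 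Composing one full right--left cycle yields
\[
\phi_{k+1}=\frac{\theta}{k}+\left(1-\frac{1}{k}\right)^{2}\phi_k+O(1/k^2),
\]
whose $k$-dependent fixed point tends to $\theta/2$. Writing $\psi_k=\phi_k-\theta/2$ converts this to $|\psi_{k+1}|\le(1-2/k)|\psi_k|+O(1/k^2)$, and a routine iteration (using $\prod_{j\le k}(1-2/j)\sim k^{-2}$) gives $\psi_k\to 0$. Hence $\phi_k\to\theta/2\in(0,\theta)$, so $v'$ is strictly interior to the segment $v_0v_1$.

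The delicate point is the last task. The per-step rate $1-1/k$ is only marginally contracting---$\prod(1-1/k)$ tends to zero only harmonically---and the $O(1/k^2)$ errors are only barely summable. The argument must exploit the \emph{simultaneous} contractions toward $v_F$ and $v_A$ to upgrade the effective rate to $(1-1/k)^2\sim 1-2/k$, which then beats the accumulated error; with only one-sided contraction the $C,D$ vectors would collapse onto an endpoint of the segment.
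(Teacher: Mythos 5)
Your strategy matches the paper's for the first bullet and for chaining (5)--(7) to reduce the four $C,D$ sequences to one, but one step as written is false. You assert that the projective moves toward $v_F$ and $v_A$ ``force $v_C^R(k)$ to remain within $O(\alpha^k)$ of the line $v_0v_1$.'' They do not: by (3) each step moves $v_C^R$ only a fraction $\approx \frac1k$ of the way toward $v_F(2k+1)$, so the component of $v_C^R$ perpendicular to $v_0v_1$ contracts by a factor $1-\frac1k+O(\frac1{k^2})$ per step, plus an exponentially small contribution from $\theta(v_F(2k+1),v_0)$. Starting from a perpendicular distance of order $1$, after $k$ steps this is of order $\prod_{j\le k}(1-\frac1j)\sim\frac1k$ --- polynomial, not exponential --- and it tends to $0$ only because $\prod_j(1-\frac1j)=0$. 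That harmonic-product observation is exactly the step the paper makes explicit and your proposal omits; you need it to conclude that $v'$ lies on the line (bullet 3). Fortunately nothing downstream uses the exponential rate: the recursion for the coordinate $\phi_k$ along the line is unaffected, since the $\bar{u}$-component of each displacement is $\frac1k$ times the $\bar{u}$-component of $v_F-v_C^R$ regardless of the size of the perpendicular error.

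With that repair, your endgame is a genuinely different, and arguably cleaner, route than the paper's. The paper proves convergence of the $\bar{u}$-coordinate by showing that the inward and outward moves of one full right--left cycle nearly cancel, making consecutive even-indexed differences $O(\frac1{k^2})$ and hence the sequence Cauchy; it then rules out $v'\in\{v_0,v_1\}$ by a separate contradiction (near an endpoint, the $\frac1k$-size moves away --- proportional to the distance $\approx|v_1-v_0|$ to the far endpoint --- dominate the moves toward it, which are proportional to the small distance to the near endpoint). Your fixed-point iteration $\phi_{k+1}=\frac{\theta}{k}+(1-\frac1k)^2\phi_k+O(\frac1{k^2})$, with effective contraction $1-\frac2k$ beating the summable errors via $\prod_{j\le k}(1-\frac2j)\sim k^{-2}$, establishes existence and interiority in one stroke and even locates the limit at the angular midpoint, a stronger statement than the paper makes. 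Two small cautions: composing (3) and (4) requires hopping between $v^R$ and $v^L$ via (6)--(7) and replacing $v_F(2k+1),v_A(2k)$ by $v_0,v_1$, both costing only $O(\alpha^k)$ (you absorb this correctly); and the identification of the limit with the exact midpoint depends on treating the angular parameter as an affine coordinate on the segment, which is harmless here since only interiority is claimed in the proposition.
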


\begin{proof} 
 
We can replace angles with Euclidean distance.  
The fact that the limits $v_0,v_1$ of $v_{E,F}(k), v_{A,B}(k)$ exist 
 follows immediately   from (1). In fact, they converge exponentially fast.  The fact that the limits are distinct comes from  (2).

Let $\bar{u}$ be the direction of the line connecting $v_0$ and $v_1$. 
Let $y$ denote a coordinate perpendicular to $\bar{u}$.  We claim that the $y$ coordinates of $v_C^R(k),v^R_D(k),v_C^L(k),v_D^L(k)$ go to $0$.  
Without loss of generality consider $v_C^R(k),v_C^L(k)$. The fact that  $v_C^R(2k+1)$ and  $v_C^R(2k+2)$ lie on a line through $v_F(2k+1)$  whose  $y$ coordinate goes  to $0$ exponentially fast   together with (3)  implies that the  $y$ coordinate of $v_C^R(2k+2)$  must decrease by a factor of approximately $1-\frac 1 k+O(\frac{1}{k^2})$ from the coordinate of $v_C^R(2k+1)$.   The same is true for the change from even to odd indices of  $v_C^L$, also  by (3).  
But then (6) says that  $d(v_C^R(2k),v^L_C(2k))\to 0$ exponentially fast  and together with the fact that $$\prod_{k=2}^\infty(1-\frac 1 k)=0$$ implies the $y$ coordinates of $v_C^{R,L}(k)$ must converge to $0$. 
The same is true of $v_D^{R,L}(k)$ by (3) and (7).

It follows from the above conditions that these $v_C^R(k), v_C^L(k),v_D^R(k),v_D^L(k)$ have a single limit $v'$ on the axis. Indeed  since the $\bar u$ coordinate of $v_C^R(2k+2)$ decreases by  $\frac{1}{2k+1}$ with small error from that of $v_C^R(2k+1)$ and then increases    by $\frac{1}{2k+2}$ with small error from even to odd implies that the distance in the $\bar{u}$ coordinate between $v_C^R(2k)$ and $v_C^R(2k+2)$ is $O(\frac 1 {k^2})$ so that these  coordinates form  a convergent Cauchy sequence.  The fact that the   
distance in the $\bar{u}$ coordinate between $v_C^R(2k)$ and $v_C^R(2k+1)$ goes to  $0$ means the entire sequence converges. The fact that (5) holds implies that the $v_D^{R,L}(k)$ have the same limit.

We claim that $v'\notin \{v_0,v_1\}.$   Suppose by contradiction  that $v'=v_0$. (The argument for $v_1$ is similar)   Assume without loss of generality $|v_1-v_0|=1$.   For $k$ even   $$d(v_D^L(k+1),v_D^L(k))=(1+o(1))  \frac 1 k(|1-v_D^L(k)|).$$ That is, since $v_D^L(k+1)$ is assumed near $0$,  we move towards $v_1$ an amount about $\frac 1 k$.   When we move towards $0$ on the next move we move not as far;
$$d(v_D^R(k+2),v_D^R(k+1))=(1+o(1)) \frac{1}{k+1}v_D^R(k+1).$$  Then again on the next move we move further away.     On odd moves we will move further away from $0$ than we move closer to $0$ on even moves as soon as say the $\bar{u}$  coordinate is at most  some positive $\epsilon$.  This says we do not converge to $0$.
 By a symmetric argument we see that no subsequence converges to $q$.  
\end{proof}

  \section{Getting a generic point}\label{sec:gp}
  \begin{lem} Let $\mu$ be a $T$-invariant measure with intervals $I_j$.  A point $x$ is generic for $\mu$ if and only if
  $\underset{n \to \infty}{\lim} \, \frac 1 n \sum_{k=0}^{n-1}\chi_{I_j}(T^k(x))=\mu(I_j)$ for all $j$.
  \end{lem}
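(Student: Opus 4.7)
The plan is to establish both implications by approximation, trading indicators of intervals for continuous compactly supported test functions and vice versa.

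For the forward direction, assume $x$ is generic for $\mu$ and fix an interval $I_j=[a_j,b_j)$. The first step is to verify that $\mu$ is non-atomic: since $T$ is minimal and the invariant measure $\mu$ is finite, an atom at a point $p$ would induce equal mass at every point in the infinite forward orbit of $p$, contradicting $\mu([0,1))<\infty$. In particular $\mu(\{a_j\})=\mu(\{b_j\})=0$, so for every $\varepsilon>0$ I can sandwich $\chi_{I_j}$ between two continuous compactly supported functions $f^-\le \chi_{I_j}\le f^+$ with $\int (f^+-f^-)\,d\mu<\varepsilon$. Applying genericity of $x$ to $f^\pm$ and squeezing yields
\[
\left|\frac{1}{n}\sum_{k=0}^{n-1}\chi_{I_j}(T^k x)-\mu(I_j)\right|\le \varepsilon
\]
in the limit, and sending $\varepsilon\to 0$ gives the desired convergence.

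For the backward direction, assume the hypothesis for every interval $I_j\subset [0,1)$ (reading the ``for all $j$'' as ranging over a generating class of subintervals). Given a continuous compactly supported $f\colon [0,1)\to \R$, uniform continuity of $f$ lets me find, for every $\varepsilon>0$, a partition of $[0,1)$ into finitely many subintervals $J_1,\ldots,J_N$ on each of which the oscillation of $f$ is less than $\varepsilon$. Pick $x_i\in J_i$ and set $g=\sum_i f(x_i)\chi_{J_i}$, so that $\|f-g\|_\infty<\varepsilon$ and $\left|\int f\,d\mu-\int g\,d\mu\right|<\varepsilon\,\mu([0,1))$. The hypothesis gives, for each $i$,
\[
\frac{1}{n}\sum_{k=0}^{n-1}\chi_{J_i}(T^k x)\to \mu(J_i),
\]
so by linearity the Birkhoff averages of $g$ converge to $\int g\,d\mu$, and a final triangle inequality against $\|f-g\|_\infty<\varepsilon$ shows the Birkhoff averages of $f$ converge to within $O(\varepsilon)$ of $\int f\,d\mu$; sending $\varepsilon\to 0$ finishes the proof.

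Neither direction presents a serious obstacle; this is a standard change-of-test-function-class argument, whose only nontrivial input is the non-atomicity of $\mu$ so that the sandwiching continuous functions can be chosen with arbitrarily close integrals. The only interpretive subtlety is the scope of the index $j$: for the implication used later in the paper (producing a generic point for $\nu$), one needs the equivalence for $I_j$ ranging over a sufficiently rich family of subintervals, and the approximation above handles any such family uniformly.
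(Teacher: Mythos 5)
Your forward direction is fine, and in fact more careful than the paper's one\hbox{-}line dismissal (``holds by definition''): since $\chi_{I_j}$ is not continuous, the sandwiching by continuous functions is genuinely needed, and it does require the non-atomicity of $\mu$, which you correctly extract from minimality.

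The backward direction has a genuine gap: you have silently strengthened the hypothesis. In the lemma the index $j$ ranges only over the $d=6$ exchanged intervals $I_A,\dots,I_F$ of the IET (``a $T$-invariant measure with intervals $I_j$''), not over ``a generating class of subintervals,'' and in the application the paper only establishes convergence of the visit frequencies to those six intervals (via the visitation vectors $v_j(x)$). Your Riemann-sum argument needs the Birkhoff averages to converge for the indicators of arbitrarily fine partitions, which is exactly the information that is not available; so as written you prove a weaker statement that does not suffice for the paper's purposes. The missing idea --- which is the actual content of the lemma --- is the following: any weak-$*$ limit point $\nu$ of the empirical measures $\frac1n\sum_{k=0}^{n-1}\delta_{T^kx}$ is $T$-invariant and, by the hypothesis (together with non-atomicity, so that the endpoints of the $I_j$ carry no mass), satisfies $\nu(I_j)=\mu(I_j)$ for the six intervals. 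Since both measures are invariant they then agree on every $T^iI_\sigma$, and for a minimal IET the family $\{T^iI_\sigma\}_{i\in\mathbb{Z},\,\sigma}$ generates the Borel $\sigma$-algebra; hence $\nu=\mu$ for every limit point, the empirical measures converge to $\mu$, and $x$ is generic. Your proof contains no substitute for this ``finitely many intervals determine an invariant measure'' step.
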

\begin{proof}  The only if direction holds by definition.  For the other direction, the point $x$ defines an invariant measure $\nu$ such that 
$$\nu(I_j)=\underset{n \to \infty}{\lim} \, \frac 1 n \sum_{k=0}^{n-1}\chi_{I_j}(T^k(x)).$$  We conclude that   $\nu(I_j)=\mu(I_j)$. However the measures of initial intervals determine the invariant measure. This is because if $T$ is minimal then $\{T^iI_\sigma\}_{i \in \mathbb{Z} ,\sigma \in \{A,...,F\}}$ generates the Borel $\sigma$-algebra.
  \end{proof}
  
Given $x \in [0,1)$, for each $j$  let $$v_j(x)=(\sum_{i=0}^{j-1}\chi_A(T^ix),...,\sum_{i=0}^{j-1}\chi_F(T^ix))$$ the vector of visits to the original intervals.  
If $T^i$ is continuous on an interval $U$ for $0\leq i<j$ then $v_j$ is constant on $U$ and in an abuse of notation let $v_j(U)$ be this vector.

 Recall $n_k$ is the number so that after $n_k$ steps of Rauzy induction we have   hit 
$$\begin{pmatrix}
A&B&C&D&E&F\\
F&E&D&C&B&A
\end{pmatrix}.$$

 Let
$J_k$ be the interval so that after $n_k$ steps of Rauzy induction the induced map, denoted $S_k$,  is defined  on $J_k$.
Let $A_k,...,F_k$ be the six subintervals of the IET $S_k$. 
For  $\sigma_k$  one  of these intervals and $x\in \sigma_k$ 
$$m_k(x)=\min\{i>0: T^ix\in J_k\}$$ is constant on $\sigma_k$ and is $|C_{\sigma_k}|$. 


Let $m_k(\sigma_k)$ be this number.  In other words in this notation $$v_{m_k(\sigma_k)}(\sigma_k)=C_{\sigma_k}.$$ Also let  $${\mathcal{O}_k(\sigma_k)=\cup_{i=0}^{m_k(\sigma_k)-1}T^i(\sigma_k)}.$$
This is the orbit of $\sigma_k$ under $T$ before $\sigma_k$ returns to $J_k$.

Now suppose we have the induced map $S_k$ on $J_k$ and we are about to go to the left side of Rauzy induction.   
Recall 
 $r_{k+1}$ is  the number of times  on the left side that $C$ beats $D$, and  so for $0\leq i\leq r_{k+1}$, $${S_k^i(C_k) \cap C_k\neq \emptyset}.$$  If on the other hand we are about to go to the  right side  then for $0\leq i\leq r_{k+1}$ $$S_k^i(D_k) \cap D_k\neq \emptyset.$$  
For simplicity let us assume that we are about to go to the left hand side. The arguments are symmetric if we are about to go to the right hand side.  Recall $r_{k+1}$ is odd.  Let $$\const:=\lfloor \frac{r_{k+1}}{2}\rfloor.$$

 Let
$$\hat{I}_k=S_k^{\const}(C_k)\cap S_k^{-\const}(C_k)\cap C_k\subset J_k.$$

\vspace{5mm}

\begin{lem}
$\hat{I}_k$ consists of two  intervals of
$\mathcal{O}_{k+1}(C_{k+1})$ and one interval  of $\mathcal{O}_{k+1}(D_{k+1})$
\end{lem}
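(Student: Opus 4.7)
The plan is to exploit the Rokhlin tower of $S_k$ over $C_k$ that the $r_{k+1}$ consecutive ``$C$-beats-$D$'' moves on the LHS create. First I would verify that the four preliminary LHS steps leave the $C$-interval unchanged, so $C^{(4)}_{top}=C_k$, and that $S_k$ restricted to $C_k$ is the translation $x\mapsto x+\ell_D$ with $\ell_D=\lambda_D^{(4)}=\lambda_D+\lambda_E+\lambda_F-\lambda_A-\lambda_B$. The hypothesis that ``$C$-beats-$D$'' runs exactly $r_{k+1}$ times is equivalent to $r_{k+1}\ell_D<\lambda_C<(r_{k+1}+1)\ell_D$, giving the tower height. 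Because $\ell_D<\lambda_C$, the intervals $C_k^{top}$ and $C_k^{bot}$ overlap, so iterating $S_k^{\pm 1}$ from $C_k$ produces a mixture of ``pure translation'' and ``jump through a different top interval'' contributions.

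Next I would analyze $S_k^{\const}(C_k)\cap C_k$ and $S_k^{-\const}(C_k)\cap C_k$ using this mixture, obtaining their decompositions into intervals. The forward direction splits $C_k$ according to how many steps a point's orbit stays inside $C_k$ before it first exits (into $D$ or beyond) and then re-enters; the backward direction is analyzed symmetrically. Intersecting the two descriptions and with $C_k$ yields an explicit formula for $\hat I_k$. The odd parity of $r_{k+1}$ and the symmetric choice $\const=(r_{k+1}-1)/2$ place the intersection at the midheight of the tower, which is what will eventually produce the two-and-one decomposition.

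Finally, to identify the pieces of $\hat I_k$ with elements of $\mathcal O_{k+1}(C_{k+1})$ and $\mathcal O_{k+1}(D_{k+1})$, I would trace the Rauzy induction through the remaining LHS steps (which relabel slabs of $C_k$ as $C_{k+1}$, $D_{k+1}$, and pieces of other intervals) and compute the visitation matrix $N(k)$ for the full LHS loop. A direct calculation gives $N(k)_{C,C}=r_{k+1}+1$ and $N(k)_{C,D}=r_{k+1}$, so the $T$-orbit of $C_{k+1}$ visits $C_k$ exactly $r_{k+1}+1$ times and the $T$-orbit of $D_{k+1}$ visits $C_k$ exactly $r_{k+1}$ times. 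Using the slab positions of these visits and the midheight location of $\hat I_k$, I would check that exactly two of the $C_{k+1}$-visits and one of the $D_{k+1}$-visits fall inside $\hat I_k$. The main technical obstacle is correctly handling the jumps of $S_k^{\pm 1}$ through different top intervals when computing $S_k^{\pm\const}(C_k)\cap C_k$, and then matching the resulting pieces to $T$-iterates via the explicit geometry of $C_{k+1}$ and $D_{k+1}$ within the tower.
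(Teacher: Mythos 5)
Your proposal is correct and follows essentially the same route as the paper: both arguments rest on decomposing $C_k$ into $r_{k+1}$ translates of a fundamental slab of length $\ell_D$ (the paper's $D'_k=D_k\setminus(S_k(A_k)\cup S_k(B_k))$) plus one shorter leftover piece, observing that $S_k^{\const}(C_k)\cap C_k$ and $S_k^{-\const}(C_k)\cap C_k$ trim $\const$ slabs from opposite ends, and then identifying what remains; the "check" you defer at the end is exactly what the paper supplies by noting that each full slab is precisely one interval of $\mathcal{O}_{k+1}(C_{k+1})$ adjacent to one interval of $\mathcal{O}_{k+1}(D_{k+1})$ (your loop-matrix entries $r_{k+1}+1$ and $r_{k+1}$ confirm this count) while the leftover partial slab is a single interval of $\mathcal{O}_{k+1}(C_{k+1})$. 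The re-entry subtlety you flag as the main obstacle is present in the paper's argument as well and is resolved the same way there, so nothing in your plan diverges from the published proof.
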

\begin{proof}

On the left side  $D_k$  beats $A_k$ and $B_k$.   Then $C_k$ as the bottom letter   beats $D_k$, the top letter $r_{k+1}$ times before losing to it.  Set $$D'_k=D_k\setminus (S_k(A_k)\cup S_k(B_k)).$$  For each $i$,   after $C_k$ has beaten $D_k'$   $i$ times,  we have $S_k^{-i}(D_k')\subset C_k$.  Thus 
\begin{multline}
C_k=S_k^{-1}(D'_k)\cup S_k^{-2}(D'_k) \cup S_k^{-3}(D'_k)... \\ \cup S_k^{-r_{k+1}}(D'_k)\cup( S_k^{-r_{k+1}-1}(D'_k)\cap C_k),
\end{multline} with this disjoint union moving right to left along $C_k$.  The last term in the union which is at the left end of $C_k$ is an image of $C_{k+1}$.  Moreover   $D'_k$ is a disjoint union of an image of $C_{k+1}$ and an image of $D_{k+1}$.  In addition $S_k$ acts by translation by  the  length of $D'_k$  along $C_k$.  Then  for $i<r_{k+1}$, $S^i_k(C_k)\cap C_k$ is $C_k$ with  the $i$ copies of $C_{k+1}\cup D_{k+1}$ furthest on the left of $C_k$ deleted.   Similarly, $S^{-i}_k(C_k)\cap C_k$ deletes the $i$ copies of $C_{k+1}\cup D_{k+1}$ furthest on the right of $C_k$. It follows then from the  definition of $\hat{I}_k$ that it is a single union of  an image of  $C_{k+1}\cup D_{k+1}$ and the last term in the union, which is an image of $C_{k+1}$. 
\end{proof}

\begin{rem}

If the top letter $\sigma$  beats the bottom letter $\tau$ then $S_k(\tau)\subset \sigma$.  Similarly
if the bottom letter  $\tau$ beats the top letter $\sigma$  then $S^{-1}(\sigma)\subset \tau$.
With this observation, the proof of the
previous lemma on the right hand side would change the negative powers of
$S_k$ to positive powers and the positive powers of $S_k$ to negative powers.
\end{rem}



Now  let $I_k=\hat{I}_k\cap \mathcal{O}_{k+1}(D_{k+1})$.
We conclude from the previous lemma that


\begin{lem}
$S_k^iI_k\subset C_k$ for $-\const<i<\const.$
\end{lem}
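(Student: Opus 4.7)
The plan is to reduce everything to the tower structure of $C_k$ that was set up in the proof of the previous lemma, and then read off the claim from the action of $S_k$ on that tower.

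First, I would unpack what $I_k$ is geometrically. The previous lemma identifies $\hat I_k$ as the disjoint union of one full ``storey'' of the tower (an image of $C_{k+1}\cup D_{k+1}$ sitting inside some $P_\ell$) together with a short leftover piece that is purely an image of $C_{k+1}$. Intersecting with $\mathcal O_{k+1}(D_{k+1})$ therefore isolates the single $D_{k+1}$ sub-image living inside the full storey $P_\ell$, so $I_k\subset P_\ell$.

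Second, I would pin down $\ell$ explicitly. Because $S_k|_{C_k}$ is translation by $|D_k'|$ and $C_k$ is tiled from right to left by $P_1,\dots,P_{r_{k+1}}$ (each of width $|D_k'|$) together with the leftmost leftover $P_{r_{k+1}+1}$, the sets $S_k^{u_k}(C_k)\cap C_k$ and $S_k^{-u_k}(C_k)\cap C_k$ are obtained from $C_k$ by deleting a block of width $u_k|D_k'|$ from the left and from the right, respectively. Using $r_{k+1}=2u_k+1$, an elementary arithmetic check shows that their intersection is exactly $P_{u_k+1}$ together with a rightmost sub-interval of $P_{u_k+2}$ of width $|P_{r_{k+1}+1}|=|C_{k+1}|$ (which accounts for the extra $C_{k+1}$-image in the previous lemma). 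Hence the full storey in $\hat I_k$ is $P_{u_k+1}$, and $I_k\subset P_{u_k+1}$.

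Third, I would iterate. Since for each $2\le j\le r_{k+1}$ the map $S_k$ sends $P_j$ rigidly onto $P_{j-1}$, and $S_k^{-1}$ sends $P_j$ onto $P_{j+1}$ as long as $j+1\le r_{k+1}$, one can apply $S_k^i$ to $P_{u_k+1}$ for every $i\in[-u_k,u_k]$ without leaving the tower: going forward lands in $P_{u_k+1-i}$ with index $\ge 1$, and going backward lands in $P_{u_k+1+i}$ with index $\le 2u_k+1=r_{k+1}$. Every such image lies inside $C_k$, so in particular $S_k^iI_k\subset S_k^iP_{u_k+1}\subset C_k$ for all $-u_k<i<u_k$ (in fact for $|i|\le u_k$).

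The main obstacle is the index accounting in the second step: the leftover $P_{r_{k+1}+1}$ is shorter than a full storey, so the two intersections $S_k^{\pm u_k}(C_k)\cap C_k$ do not cut $C_k$ along storey boundaries but clip a partial storey on one side. The payoff is that the surviving region is precisely one full storey ($P_{u_k+1}$) plus a short tail of width $|C_{k+1}|$, and the internal $C_{k+1}/D_{k+1}$ partition of $D_k'$ is arranged so that the tail lies in $\mathcal O_{k+1}(C_{k+1})$; that is exactly what confines $I_k$ to the single full storey and lets the simple shift dynamics finish the proof.
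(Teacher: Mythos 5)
Your argument is correct and is exactly the deduction the paper intends: the lemma is stated there as an immediate consequence of the preceding one, and your tower bookkeeping (locating $I_k$ inside the single surviving full storey $S_k^{-(u_k+1)}(D_k')$ of the partition of $C_k$ into translates of $D_k'$, and observing that $S_k^{\pm i}$ for $i\le u_k$ keeps you among the storeys $S_k^{-j}(D_k')$ with $1\le j\le r_{k+1}$, all contained in $C_k$) supplies precisely the omitted details. Note also that the strict inequalities $-u_k<i<u_k$ in the statement leave slack, so the conclusion survives even an off-by-one in identifying which full storey remains after the two clippings.
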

 Let $$\hat{m}_k=\min\{i \geq 0:T^iI_k\subset J_{k+1}\}.$$ 

Then $v_{\hat{m}_k}(I_k)$ consists of  $\const$ iterates of the entire orbit $\mathcal{O}_k(C_k)$, followed by    the entire  orbit of $\mathcal{O}(D_k)$, and  then followed by the entire orbit of $\mathcal{O}(A_k)$.
Now the fact  that 
 $$\const m_k(C_k)=(1+o(1)) \frac k 2 m_k(A_k)$$ implies 
$$m_k(A_k)=o(\hat{m}_k).$$  Let $\mathcal{I}_{k-1}$ be an interval that has been inductively defined so that it contains a unique interval of $\mathcal{O}_k(C_k)$. Let $\mathcal{I}_k$ be defined as $\mathcal{O}_k(I_k)\cap \mathcal{I}_{k-1}$. Observe that it contains a unique interval of $\mathcal{O}_{k+1}(D_{k+1})$ and therefore the
 number $$j_k=\min\{j:T^j\mathcal{I}_k\subset J_{k+1}\}$$
is well-defined and $T^{j_k}$ is continuous on $\mathcal{I}_k$.

\begin{lem} 
\label{lem:v} 
$\theta(v_{m_k(C_k)}(C_k), v_{j_k}(\mathcal{I}_k))=o(1)$ and
 $\theta(v_{m_k(D_k)}(D_k), v_{j_k}(\mathcal{I}_k))=o(1)$.
\end{lem}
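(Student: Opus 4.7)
The plan is to write $v_{j_k}(\mathcal{I}_k)$ explicitly as $\const\cdot C_C(n_k)$ plus a perturbation whose $\ell^1$ norm is $o(\const\cdot |C_C(n_k)|)$, and then conclude via Lemma~\ref{lem:angles} that the projective angle with $C_C(n_k)$ is $o(1)$. Since $\mathcal{I}_{k-1}$ was inductively constructed to contain a unique interval of $\mathcal{O}_k(C_k)$ and $I_k\subset C_k$, the set $\mathcal{I}_k=\mathcal{O}_k(I_k)\cap \mathcal{I}_{k-1}$ is effectively a translate $T^{p_k}(I_k)$ for some $0\leq p_k<m_k(C_k)$ on which $T^{j_k}$ is continuous. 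Because $\hat m_k$ is the first time the $T$-orbit of $I_k$ enters $J_{k+1}$, this forces $j_k=\hat m_k-p_k$ and hence
\begin{equation*}
v_{j_k}(\mathcal{I}_k) = v_{\hat m_k}(I_k) - v_{p_k}(I_k).
\end{equation*}
Combined with the decomposition of $v_{\hat m_k}(I_k)$ as $\const$ copies of $\mathcal{O}_k(C_k)$ followed by single copies of $\mathcal{O}_k(D_k)$ and $\mathcal{O}_k(A_k)$ noted just before the lemma, this becomes
\begin{equation*}
v_{j_k}(\mathcal{I}_k) = \const\cdot C_C(n_k)+C_D(n_k)+C_A(n_k)-v_{p_k}(I_k).
\end{equation*}

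Next I would bound the three correction vectors in $\ell^1$ norm. The trivial bound $|v_{p_k}(I_k)|\leq |C_C(n_k)|$ (valid because $p_k$ is strictly less than the first return time of $I_k\subset C_k$ to $J_k$) handles the last term, since $\const\to\infty$. The identity $\const\cdot m_k(C_k)=(1+o(1))\tfrac{k}{2}\,m_k(A_k)$ recorded just before the lemma rearranges to $|C_A(n_k)|=O(\tfrac{1}{k})\cdot \const\cdot|C_C(n_k)|$, handling the middle term. For $C_D(n_k)$, the inductive inequalities (\ref{eq:induct1}) and (\ref{eq:induct2}) combine, provided $a_k$ is chosen sufficiently large, to yield $|C_D(n_k)|\leq\tfrac{k^2}{a_{k-1}a_k}|C_A(n_k)|=o(\const\cdot|C_C(n_k)|)$.

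Applying Lemma~\ref{lem:angles} with $w=\const\cdot C_C(n_k)$ and $v=C_D(n_k)+C_A(n_k)-v_{p_k}(I_k)$, the ratio $\|v\|/\|v+w\|=o(1)$ from the preceding estimates gives $|\sin\theta(C_C(n_k),v_{j_k}(\mathcal{I}_k))|=o(1)$, which is the first assertion. For the second, Proposition~\ref{prop:vectors}(5) (interpreted for the vectors $C_C(n_k),C_D(n_k)$ by absorbing the bounded number of Rauzy steps separating them from the $v^R$ and $v^L$ variants) yields $\theta(C_C(n_k),C_D(n_k))\leq\alpha^k$, and the triangle inequality for angles then gives $\theta(C_D(n_k),v_{j_k}(\mathcal{I}_k))=o(1)$.

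The main obstacle is the combinatorial identification in the first paragraph: one must verify carefully that the inductive construction really singles out a single translate $T^{p_k}(I_k)$ inside $\mathcal{O}_k(I_k)$ on which $T^{j_k}$ remains continuous, so that $v_{j_k}(\mathcal{I}_k)$ differs from $v_{\hat m_k}(I_k)$ exactly by a truncated initial segment of length $p_k<m_k(C_k)$. Once that identification is settled, the norm estimates follow directly from the inductive bounds already in force, and Lemma~\ref{lem:angles} closes the argument.
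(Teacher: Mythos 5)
Your argument is correct and follows essentially the same route as the paper's own (one-line) proof, which likewise observes that $v_{j_k}(\mathcal{I}_k)$ is built from blocks of the $C_k$ and $D_k$ columns plus a comparatively small $\mathcal{O}(A_k)$ contribution and then argues as in Proposition~\ref{prop:vectors} via Lemma~\ref{lem:angles}. You have merely made explicit the truncation term $v_{p_k}(I_k)$ and the norm estimates that the paper leaves implicit.
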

In fact these angles approach $0$ exponentially fast.

\begin{proof}
This is similar to the proof  of Proposition~\ref{prop:vectors}  because  $v_{j_k}(\mathcal{I}_k)$ is made up of blocks of $C_{k}$ or $D_{k}$ and a smaller column of  $\mathcal{O}(A_k)$.
\end{proof}


We conclude the proof of the Main Theorem.  We let 
$$x=\cap_{k=1}^\infty \mathcal{I}_k.$$ We will show that $x$ is generic for $\nu$.   To do  that we show that the angle its visitation vector makes with the column $C_k$ goes to $0$ as $k\to\infty$. 
This is  shown in the next Proposition.

\begin{prop}
\label{prop:finish}For $j\in [j_{k-1},j_k]$ we have that $$\lim_{k\to\infty}\theta(v_{j}(\mathcal{I}_{k}),v_{m_{k}(C_k)}(C_k))=0.$$ 
\end{prop}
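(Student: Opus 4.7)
The plan is to induct on $k$ and control how the direction of the visitation vector evolves uniformly over $j \in [j_{k-1}, j_k]$, separating the analysis into the endpoints and the interior of the stage.

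First I handle the endpoints. At $j = j_{k-1}$, since $\mathcal{I}_k \subset \mathcal{I}_{k-1}$ and $T^i$ is continuous on $\mathcal{I}_{k-1}$ for $i \le j_{k-1}$, we have $v_{j_{k-1}}(\mathcal{I}_k) = v_{j_{k-1}}(\mathcal{I}_{k-1})$. The inductive hypothesis at step $k-1$ (applied at the right endpoint of the previous stage) gives $\theta(v_{j_{k-1}}(\mathcal{I}_{k-1}), v_{m_{k-1}(C_{k-1})}(C_{k-1})) \to 0$, and Proposition~\ref{prop:vectors}, parts (5)--(6), gives projective proximity of $v_{m_{k-1}(C_{k-1})}(C_{k-1})$ and $v_{m_k(C_k)}(C_k)$, so the bound holds at the left endpoint. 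The right endpoint $j = j_k$ is exactly Lemma~\ref{lem:v}.

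For intermediate $j$ I decompose $v_j(\mathcal{I}_k) = v_{j_{k-1}}(\mathcal{I}_k) + W_j$, where $W_j$ records visits during times $j_{k-1} \le i < j$. The orbit analysis preceding Lemma~\ref{lem:v} shows that after time $j_{k-1}$ the orbit traverses $J_k$ in an ordered way: first $u_k$ full passes through $\mathcal{O}_k(C_k)$, then one pass through $\mathcal{O}_k(D_k)$, then one pass through $\mathcal{O}_k(A_k)$, before landing in $J_{k+1}$. So I write $W_j = \ell_C C_k + \ell_D D_k + \ell_A A_k + R$, with $\ell_C \le u_k$, $\ell_D, \ell_A \in \{0,1\}$, and $R$ the partial-orbit visit count for the current segment (entrywise dominated by the corresponding column). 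The traversal order forces $\ell_A = 1$ to imply $\ell_C = u_k$ and $\ell_D = 1$.

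The direction control now uses Proposition~\ref{prop:vectors}, parts (5)--(7), which places $C_k$ and $D_k$ projectively close to the common limit $v'$, so $\ell_C C_k + \ell_D D_k$ is already projectively close to $C_k$. The column $A_k$ lies near $v_1 \neq v'$, but the identity $u_k m_k(C_k) = (1+o(1))(k/2) m_k(A_k)$ ensures $|u_k C_k| \gg |A_k|$ by a factor $\sim k/2$, so whenever $A_k$ appears the tilt it induces in the full-stage direction is $O(1/k)$. The partial remainder $R$ is entrywise bounded by a single column, so it is dominated either by the accumulated $\ell_C C_k$ term in $W_j$ (once $\ell_C$ is large) or by the stored $v_{j_{k-1}}(\mathcal{I}_k)$, which is already close to $C_k$ direction by the earlier step of the induction. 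Lemma~\ref{lem:angles} converts each of these norm dominations into angle bounds.

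The main obstacle I expect is the transitional regime at the beginning of stage $k$, where $\ell_C$ is too small for the fresh $C_k$ pieces to dominate $v_{j_{k-1}}(\mathcal{I}_k)$ in norm, yet $W_j$ already contains a partial orbit $R$ whose direction need not be close to $C_k$. To manage this cleanly I would strengthen the induction hypothesis to carry an explicit, exponentially-decaying-in-$k$ angle bound (which is consistent with the exponential rate already provided by Lemma~\ref{lem:v} at the right endpoints), and then apply Lemma~\ref{lem:angles} summand by summand so that each individual piece of $W_j$ tilts the running direction by only a fraction of the stored margin, uniformly in $j \in [j_{k-1}, j_k]$.
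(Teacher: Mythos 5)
Your endpoint analysis and your block decomposition $v_j(\mathcal{I}_k) = v_{j_{k-1}}(\mathcal{I}_k) + \ell_C C_k + \ell_D D_k + \ell_A A_k + R$ match the structure of the paper's argument, and your treatment of the full blocks (the accumulated $C_k$ and $D_k$ blocks via Proposition~\ref{prop:vectors}, the $A_k$ block via $u_k m_k(C_k) = (1+o(1))\tfrac{k}{2} m_k(A_k)$) is sound. The gap is in your treatment of the partial block $R$, precisely in the transitional regime you flag. You assert that $R$, being entrywise bounded by a single column, is dominated in norm either by $\ell_C C_k$ or by the stored $v_{j_{k-1}}(\mathcal{I}_k)$. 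The second alternative is false: one pass through $\mathcal{O}_k(C_k)$ has length $m_k(C_k) = r_k m_{k-1}(D_{k-1}) + m_{k-1}(C_{k-1}) + m_{k-1}(F_{k-1})$, while $j_{k-1}$ is of order $u_{k-1} m_{k-1}(C_{k-1}) = \lfloor r_k/2 \rfloor\, m_{k-1}(C_{k-1})$; since $|C_{k-1}|$ and $|D_{k-1}|$ are comparable, the very first partial pass through $C_k$ can already have norm comparable to (indeed roughly twice) everything accumulated up to time $j_{k-1}$. So when $\ell_C = 0$ and $R$ is a sizeable fraction of one $C_k$-pass, neither alternative controls it, and strengthening the inductive hypothesis to an exponentially small angle bound does not help: the difficulty is the norm and a priori unknown direction of $R$, not the size of the stored margin.

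What closes this gap, and is the paper's actual argument, is one further level of decomposition: a single pass through $\mathcal{O}_k(C_k)$ itself splits into $r_k$ consecutive copies of the full orbit $\mathcal{O}_{k-1}(D_{k-1})$, followed by one copy of $\mathcal{O}_{k-1}(C_{k-1})$ and one of $\mathcal{O}_{k-1}(F_{k-1})$. Each complete $D_{k-1}$-block contributes a vector whose direction is already close to that of $C_k$ by Proposition~\ref{prop:vectors}; the one incomplete $D_{k-1}$-block present at the current time has norm at most $m_{k-1}(D_{k-1}) = o(j_{k-1})$ and is therefore harmless; and the trailing $C_{k-1}$- and $F_{k-1}$-blocks are $o(r_k m_{k-1}(D_{k-1}))$, hence negligible relative to what has accumulated by the time they occur. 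With this finer decomposition the summand-by-summand application of Lemma~\ref{lem:angles} that you propose does go through uniformly in $j$, and iterating over the $u_k - 1$ passes and the final $A_k$-block completes the proof.
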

\begin{proof}
By Lemma~\ref{lem:v}, the angle $v_{j_{k-1}}(\mathcal{I}_k)=v_{j_{k-1}}(\mathcal{I}_{k-1})$ makes with $v_{m_{k-1}(D_{k-1})}(D_{k-1})$ goes to $0$ with $k$ and by 
Proposition~\ref{prop:vectors}  the angle of the latter with  $v_{m_k(C_k)}(C_k)$ also goes to $0$.   Thus the conclusion of the Proposition holds for  $j=j_{k-1}$.
 Moreover $$T^{j_{k-1}}(\mathcal{I}_k)=S_k(I_k) \subset C_k.$$

We wish to  increase the set of  times in the orbit of $\mathcal{I}_k$.  
Again recall that  we are assuming that we have just finished the right hand side.  Since $$j_k=j_{k-1}+(\const-1)m_k(C_k)+m_k(A_k),$$ we first consider $m_k(C_k)$. Now  $v_{m_k(C_k)}(C_k)$ consists of  $r_k$ copies of the vector of visits of the entire orbit $\mathcal{O}_{k-1}(D_{k-1})$ to the original intervals  followed by a single copy of the vector of visits of $\mathcal{O}_{k-1}(C_{k-1})$ to the original intervals followed by a single copy of $\mathcal{O}_{k-1}(F_{k-1})$. 
Given that  $m_{k-1}(D_{k-1})=o(j_{k-1})$,   Proposition~\ref{prop:finish} holds for any $i$ in the initial  interval $[j_{k-1}, j_{k-1}+m_{k-1}(D_{k-1})]$.  Then by Proposition~\ref{prop:vectors}  and the fact that the Proposition~\ref{prop:finish} holds  for $j_{k-1}$, 
we have that $$\theta(v_{j_{k-1}}(\mathcal{I}),v_{m_{k-1}(D_{k-1})}(D_{k-1}))=o(1).$$ This implies that for any   
$$i \in [j_{k-1},j_{k-1}+r_km_{k-1}(D_{k-1})],$$  if one considers the orbit of $\mathcal{I}_k$ at time $i$, 
 then $$\lim_{k\to\infty}\theta(v_i(\mathcal{I}_k), v_{m_k(C_k)}(C_k))=0.$$ Since  $m_{k-1}(F_{k-1})=o(r_km_{k-1}(D_{k-1}))$ we have that if $i \in [j_{k-1},j_{k-1}+m_{k}(C_k)]$ then $$\lim_{k\to\infty}\theta(v_i(\mathcal{I}_k),v_{m_k(C_k)}(C_k))=0.$$ 
Iterating this shows that if 
$${i\in [j_{k-1},j_{k-1}+(\const-1)m_k(C_k)]}$$
 we have that $$\lim_{k\to\infty}\theta(v_i(\mathcal{I}_k),v_{m_k(C_k)}(C_k))=0.$$ 
 Finally since  $m_k(A_k)=o(\const m_k(C_k))$ and 
 $$j_k=j_{k-1}+(\const-1)m_k(C_k)+m_k(A_k),$$
  this finishes the proof  of the Proposition and the proof of the main theorem is complete.
\end{proof}

\end{document}